\DeclareSymbolFont{AMSb}{U}{msb}{m}{n}
\DeclareSymbolFontAlphabet{\Bbb}{AMSb}
\newtheorem{theorem}{Theorem}[section]
\newtheorem{lemma}[theorem]{Lemma}
\newtheorem{corollary}[theorem]{Corollary}
\newtheorem{proposition}[theorem]{Proposition}
\newtheorem{example}[theorem]{Example}
\newcommand{\norm}[1]{\left\Vert#1\right\Vert}
\newcommand{\abs}[1]{\left\vert#1\right\vert}
\newcommand{\C}{\mathbb{C}}
\newcommand{\R}{\mathbb{R}}
\newcommand{\Z}{\mathbb{Z}}
\newcommand{\N}{\mathbb{N}}
\newcommand{\spec}{\operatorname{{Spec}}}
\newcommand{\U}{{\mathcal U}}
\newcommand{\Emb}{\textrm{Emb}}
\newcommand{\Hom}{\textrm{Hom}}
\newcommand{\Fix}{\textrm{Fix}}
\newcommand{\Per}{\textrm{Per}}
\begin{document}

\begin{center}
{\large {\bf Global Dynamics for Symmetric Planar Maps}}\\
\mbox{} \\
\begin{tabular}{ccc}
{\bf Bego\~na Alarc\'on$^{\mbox{a,c}}$} & {\bf Sofia B.\ S.\ D.\ Castro$^{\mbox{a,b}}$} & {\bf Isabel S.\ Labouriau$^{\mbox{a}}$}
\end{tabular}
\end{center}

\bigbreak
\noindent {\small $^{\mbox{a}}$} Centro de Matem\'atica da Universidade do Porto, Rua do Campo Alegre 687, 4169-007 Porto, Portugal.

\noindent {\small $^{\mbox{b}}$} Faculdade de Economia do Porto, Rua Dr. Roberto Frias, 4200-464 Porto, Portugal.

\noindent {\small $^{\mbox{c}}$} permanent address:
Departament of Mathematics. University of Oviedo; Calvo Sotelo s/n; 33007 Oviedo; Spain.

\bigbreak

\begin{abstract}

We consider sufficient conditions to determine the global dynamics for equivariant maps of the plane with a unique fixed point which is also hyperbolic. When the map is equivariant under the action of a compact Lie group, it is possible to describe the local dynamics.
 In particular, if the group contains a reflection, there is a line invariant by the map. This allows us to use results  based on the theory of free homeomorphisms to describe the global dynamical behaviour. 
We briefly discuss the case when reflections are absent, for which global dynamics may not follow from local dynamics near the unique fixed point.

\end{abstract}
\medskip

\noindent{\bf MSC 2010:} 37B99, 37C80, 37C70.\\
{\bf Keywords:} planar embedding, symmetry, local and global dynamics.

\newpage

\section{Introduction}

Dynamics of planar maps has drawn the attention of many authors.
See, for instance, references such as \cite{Alarcon}, \cite{Brown}, \cite{Cima-Manosa}, \cite{franks} or \cite{LeRoux}.
Problems addressed include the theory of free homeomorphism and trivial dynamics, searching for sufficient conditions
for global results with topological tools or the Discrete Markus-Yamabe Problem.
The latter conjectures results on global stability from local stability of the fixed point under some additional condition on the Jacobian matrix\footnote{The Discrete Markus-Yamabe Problem also derives its fame from its relation to the Jacobian Conjecture \cite{vanDenEssen}.}.
Results in \cite{Alarcon} guarantee the global stability of the unique fixed point  resorting to local conditions and the existence of an invariant embedded curve joining the fixed point to infinity.

To the best of our knowledge, this problem has been addressed exclusively in a non-symmetric context.
However the existence of the invariant curve in \cite{Alarcon}
made it seem natural to approach this problem in a symmetric setting,
 where invariant spaces for the dynamics are a key feature.
In this context, we address the global dynamics of planar diffeomorphisms having a unique fixed point which is hyperbolic.
We restrict our attention to the cases where the fixed point is either  an attractor or a repellor.
The case of a saddle point does not rely so much on symmetry.
It will therefore be addressed elsewhere.

The issue of uniqueness of a fixed point has been addressed by Alarc\'on {\em et al}
\cite{Alarcon-orbitas-periodicas} who gave simple conditions for planar maps under which the origin is the unique fixed point.

The presence of symmetry constrains  the admissible local dynamics near the fixed point.
We extend, whenever possible, the local dynamics to the whole plane using the properties determined by the symmetry.
The reader may see that, in the case of $O(2)$, $\Z_2(\langle\kappa\rangle)$ and $D_n$,
 local dynamics determines global dynamics.
However, $SO(2)$ and $\Z_n$ symmetry do not provide any extra information on how to go from local to global dynamics. Actually, for $\Z_n$, we have constructed examples where more than one configuration can occur.
For instance, in the case of a local attractor, the global dynamics may exhibit either several periodic points \cite{SofisbeSzlenk} or a globally attracting set with special properties
 \cite{AlarconDenjoy}. 
For $SO(2)$ the dynamics is determined by a one-dimensional map.
These situations provide a comprehensive description of the possible global dynamics.

The paper is organised as follows: in the next section we transcribe preliminary results concerning dynamics of planar maps and equivariance. These are organised in two subsections, the first on Topological Dynamics and the second on Equivariance. The reader familiar with the subject may skip them without loss. 
Section~\ref{secLocalGlobal} shows how  local results  can be extended to global results.

We thus establish all possible dynamics in the presence of
symmetry. This is discussed at the end in Section~\ref{Discussion}. Note that the aforementioned groups are the only compact
subgroups of $O(2)$ acting on the plane. For the reader's
convenience the results obtained are summarised in the Equivariant
Table of Appendix~\ref{EquivTable}.

\section{Preliminaries} \label{secPre}

This section contains definitions and known results about topological dynamics and equivariant theory. These are grouped in three separate subsections, which are elementary for readers in each field.

\subsection{Topological Dynamics}

We consider planar topological embeddings, that is, continuous and injective maps defined in $\R^ 2$. The set of topological embeddings of the plane is denoted by $\Emb(\R^2)$.

Recall that for $f\in \Emb(\R^2)$ the equality $f(\R^2)=\R^2$ may not hold.
Since every map $f\in \Emb(\R^2)$ is open (see \cite{libroembeddings}), we will say that $f$ is a homeomorphism if $f$ is a topological embedding defined onto $\R^2$.
The set of homeomorphisms of the plane will be denoted by $\Hom(\R^2)$.

When $\mathcal{H}$ is  one of these sets  we denote by $\mathcal{H}^ {+}$ (and $\mathcal{H}^ {-}$) the subset of orientation preserving (reversing) elements of $\mathcal{H}$.

Given a continuous map $f: \R^2 \to \R^2$, we say that $p$ is a \emph{non-wandering point} of $f$ if for every neighbourhood $U$ of $p$ there exists an integer $n>0$ and a point $q\in U$ such that $f^n(q)\in U$. We denote the set of non-wandering points by $\Omega(f)$. We have
$$
\Fix(f)\subset \Per(f) \subset \Omega(f),
$$
where $\Fix(f)$ is the set of fixed points of $f$,  and $\Per(f)$ is the set of periodic points of $f$.

Let $\omega(p)$ be the set of points $q$ for which there is a
sequence $n_j\to+\infty$ such that $f^{n_j}(q)\to p$. If $f\in
\Hom(\R^2)$ then $\alpha(p)$ denotes the set $\omega(p)$ under
$f^{-1}$.

Let $f\in \Emb(\R^2)$ and $p\in \R^2$.  We say that $\omega(p)=\infty$ if $\norm{f^n(p)}\to \infty$ as $n$ goes to $ \infty$. Analogously, if $f\in \Hom(\R^2)$, we say that $\alpha(p)=\infty$ if $\norm{f^{-n}(p)}\to \infty$ as $n$ goes to $ \infty$.

We say that $0\in \Fix(f)$ is a \emph{local attractor} if its basin of attraction $\U=\{p\in \R^ 2 : \omega(p)=\{0\}\}$ contains an open neighbourhood of $0$ in $\R^2$ and that $0$ is a \emph{global attractor} if $\U=\R^2$. The origin is a \emph{stable fixed point} if for every neighborhood $U$ of $0$ there exists another neighborhood $V$ of $0$ such that $f(V)\subset V$ and $f(V)\subset U$. Therefore, the origin is an \emph{asymptotically local (global) attractor} or a \emph{(globally) asymptotically stable fixed point} if it is a stable local (global) attractor. See \cite{Bhatia} for examples.

We say that $0\in \Fix(f)$ is a \emph{local repellor} if there exists a neighbourhood $V$ of $0$ such that $\omega(p)\notin V$ for all $0\neq p\in \R^ 2$ and a \emph{global repellor} if this holds for $V=\R^2$.
 We say that the origin is an \emph{asymptotically global repellor} if it is a  global repellor
 and, moreover, if for any neighbourhood $U$ of $0$ there exists another neighbourhood $V$ of $0$, such that,
 $V\subset f(V)$ and $V\subset f(U)$.

When the origin is a fixed point of a $C^ 1-$map of the plane we say the origin is a \emph{local saddle} if the two eigenvalues of $Df_0$, $\alpha, \beta$, are both real and verify $0<\abs{\alpha}<1<\abs{\beta}$.

We also need the following theorem of Murthy \cite{Murthy}, to be applied to parts of the domain of our maps with no fixed points:

\begin{theorem}[Murthy \cite{Murthy}] \label{teoMurthy} Let $f\in \Emb^ {+}(\R^ 2)$. If $\Fix(f)=\emptyset$, then $\Omega(f)=\emptyset$.
\end{theorem}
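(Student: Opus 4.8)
The plan is to recast Theorem~\ref{teoMurthy} as a statement about \emph{free disks} and then to recover it from Brouwer's translation lemma, the essential observation being that the classical argument for plane homeomorphisms never uses surjectivity and so survives the weakening from $\Hom^+(\R^2)$ to $\Emb^+(\R^2)$.

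First I would reduce to a local claim. Suppose $p\in\Omega(f)$. Since $\Fix(f)=\emptyset$ we have $f(p)\neq p$, so by continuity there is a closed topological disk $D$ with $p$ in its interior and $f(D)\cap D=\emptyset$ (take $D=\overline{B(p,\delta)}$ with $\delta$ small enough that $f(D)$ lies in a ball around $f(p)$ disjoint from $D$). Applying the definition of non-wandering point with $U=\operatorname{int}(D)$ yields $q\in\operatorname{int}(D)$ and $n>0$ with $f^n(q)\in D$; since $f(D)\cap D=\emptyset$ we must have $n\geq 2$, hence $f^n(D)\cap D\neq\emptyset$ for some $n\geq 2$. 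It therefore suffices to prove: \emph{if $f\in\Emb^{+}(\R^2)$ has no fixed point and $D$ is a closed disk with $f(D)\cap D=\emptyset$, then $f^n(D)\cap D=\emptyset$ for every $n\geq 1$.}

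The heart of the matter is Brouwer's lemma, which I would prove first for a \emph{translation arc} $\gamma$, i.e.\ an arc from a point $x$ to $f(x)$ with $f(\gamma)\cap\gamma=\{f(x)\}$; it is classical that such arcs exist joining any point to its image when $f$ is a fixed-point-free embedding. The claim is that $f^n(\gamma)\cap\gamma=\emptyset$ for all $n\geq 2$. One argues by contradiction: let $m\geq 2$ be the least integer with $f^m(\gamma)\cap\gamma\neq\emptyset$, and use this minimality together with the Jordan curve theorem to extract from $\gamma\cup f(\gamma)\cup\cdots\cup f^m(\gamma)$ a Jordan curve $J$ bounding a disk $\Delta$ along whose boundary the index of the map $z\mapsto f(z)-z$ equals $\pm 1$. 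Because $f$ is a globally defined continuous map it is in particular defined on all of $\Delta$, and a nonzero index on $\partial\Delta$ forces a fixed point of $f$ inside $\Delta$, contradicting $\Fix(f)=\emptyset$. The passage from arcs to a free disk $D$ is then routine: join a point of $D$ to its image by a translation arc contained in a slightly enlarged free disk and invoke the disjointness of the forward images of that arc, or simply rerun the index argument with $D$ in place of $\gamma$. Combining this with the reduction above gives $\Omega(f)=\emptyset$.

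The step I expect to be the main obstacle is the proof of Brouwer's lemma in the previous paragraph: the combinatorial extraction of the Jordan curve $J$ from the potentially intricate pattern of intersections of $\gamma,f(\gamma),\dots,f^m(\gamma)$, and the careful verification that the index of $f-\mathrm{id}$ along it is nonzero. This is precisely the technical core of the classical plane translation theorem; the only genuinely new bookkeeping here is to confirm that every iterate of $f$ is applied in the forward direction to a compact set on which $f$ is automatically defined, so that the surjectivity of $f$ is never invoked anywhere in the argument.
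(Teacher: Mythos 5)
The paper offers no proof of this statement at all: it is quoted directly from Murthy \cite{Murthy}, so there is nothing internal to compare with. Your outline follows the same Brouwer-theoretic route that the cited source takes (translation arcs and free disks, a Jordan curve built from iterated images, the index of $z\mapsto f(z)-z$, plus the observation that only forward iterates of compact sets are ever needed, so surjectivity should be irrelevant). Your reduction is correct: if $p\in\Omega(f)$ and $\Fix(f)=\emptyset$, a small free closed disk $D$ around $p$ together with the definition of non-wandering forces $f^n(D)\cap D\neq\emptyset$ for some $n\geq 2$, and a nonzero index along a curve bounding a disk does force a fixed point of a map that is merely continuous on that disk.

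As a proof, however, there are genuine gaps. First, the passage from translation arcs to free disks is not routine as you describe it: a translation arc joins $x$ to $f(x)$, so it can never be contained in a free disk, slightly enlarged or not, since any set containing both $x$ and $f(x)$ meets its own image; and disjointness of the iterates of an arc through a point of $D$ does not by itself control the iterates of $D$. The standard deduction is different: taking $n\geq 2$ minimal with $f^n(D)\cap D\neq\emptyset$ and $x\in D$ with $f^n(x)\in D$, one composes $f$ with a homeomorphism $h$ supported in $D$ sending $f^n(x)$ to $x$; the embedding $g=h\circ f$ is still fixed-point free (a fixed point would lie in $D\cap f^{-1}(D)=\emptyset$) and now has a genuine periodic point, so one needs the lemma ``periodic point implies fixed point'' for orientation-preserving \emph{embeddings} --- exactly the Brouwer-type statement you defer. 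Second, that deferred statement is the entire content of the theorem: the existence of translation arcs, the combinatorial extraction of the Jordan curve, and the verification that the index is $\pm 1$ are classical for homeomorphisms, and checking that these arguments use only forward images (never $f^{-1}$ of arbitrary sets, never ontoness) is precisely the contribution of Murthy's paper. Asserting that the classical argument ``survives the weakening'' without carrying out that check is where the proof is missing; it is not a bookkeeping afterthought.
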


We say that $f\in \Emb(\R^2)$ has \emph{trivial dynamics} if, for all $p\in \R^ 2$, either  $\omega(p) \subset \Fix(f)$ or $\omega(p)=\infty$. Moreover, we say that a planar homeomorphism has trivial dynamics if, for all $p\in \R^ 2$, both $\omega(p), \alpha(p) \subset \Fix(f)\cup\{\infty\}$.

Let $f: \mathbb{R}^N \rightarrow \mathbb{R}^N$ be a continuous map.
Let $\gamma : [0,\infty) \to \mathbb{R}^2$  be a topological
embedding of $[0,\infty) \,. \;$  As usual, we identify $\gamma$
with $\gamma\,([0,\infty))\,.$ We will say that $\gamma$ is an \emph{
$f-$invariant ray} if $\, \gamma(0)=(0,0)\,, \,$ $\,f(\gamma)\subset
\gamma \,, \,$ and $\lim_{t\to\infty}|\gamma(t)|=\infty$, where
$|\cdot| \,$ denotes the usual Euclidean norm.

\begin{proposition}[Alarc\'on {\em et al.} \cite{Alarcon}] \label{lemrayo} Let $f\in \Emb^ {+}(\R^2)$ be such that $\Fix(f)=\{0\}$. If there exists an $f-$invariant ray $\gamma$, then $f$ has trivial dynamics.
\end{proposition}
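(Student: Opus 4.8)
The plan is to separate the behaviour on the invariant ray from the behaviour on its complement, and to feed the latter into Murthy's Theorem~\ref{teoMurthy}.

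First I would analyse the dynamics \emph{on} $\gamma$. Conjugating by the embedding $\gamma\colon[0,\infty)\to\R^2$ turns $f|_\gamma$ into a continuous injection $g\colon[0,\infty)\to[0,\infty)$ with $g(0)=0$; being continuous and injective on an interval it is strictly monotone, and since it fixes the endpoint $0$ it is strictly increasing. As $\Fix(f)=\{0\}$ it has no other fixed point, so either $g(t)>t$ for all $t>0$ --- whence $g$ is onto $[0,\infty)$, $f(\gamma)=\gamma$, and $\norm{f^n(\gamma(t))}\to\infty$ for every $t>0$ --- or $g(t)<t$ for all $t>0$, whence $g^n(t)\to0$ and $f^n(\gamma(t))\to0$. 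In either case the dynamics on $\gamma$ is trivial, and, because $f(\gamma)\subset\gamma$, any forward orbit that meets $\gamma$ is trivial as well (it escapes to infinity, or converges to $0$).

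Next I would look at $W:=\R^2\setminus\gamma$. As the complement of a properly embedded half-line, $W$ is open, connected and simply connected, hence homeomorphic to $\R^2$; and the only possible fixed point of $f$, the origin, lies on $\gamma$. If $f(\gamma)=\gamma$ then injectivity of $f$ gives $f^{-1}(\gamma)=\gamma$, so $f(W)\subset W$, and $f|_W$ --- read through $W\cong\R^2$ --- is a fixed-point-free element of $\Emb^+(\R^2)$. Murthy's Theorem~\ref{teoMurthy} then yields $\Omega(f|_W)=\emptyset$, so every forward orbit in $W$ leaves every compact subset of $W$ and has no $\omega$-limit point inside $W$. Transporting this back and using that $W$ is simply connected --- so an orbit in $W$ cannot wind around $0$ --- one gets that such an orbit accumulates in $\R^2$ only on $\gamma\cup\{\infty\}$; a short additional argument (most transparently via the universal cover of $\R^2\setminus\{0\}$: $f|_{\R^2\setminus\{0\}}$ lifts to a fixed-point-free orientation-preserving self-embedding of $\R^2$ to which Theorem~\ref{teoMurthy} applies, while $\gamma$ lifts to a $\Z$-family of disjoint properly embedded lines that confine the lifted orbits to a single strip) then shows it cannot accumulate on $\gamma\setminus\{0\}$ either. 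Hence $\omega(p)\subset\{0\}$, or $\omega(p)=\emptyset$ and $\norm{f^n(p)}\to\infty$. Combined with the previous paragraph, this is trivial dynamics.

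The main obstacle is the case $g(t)<t$: then $f(\gamma)$ may be a proper subset of $\gamma$, so $W$ need not be forward invariant and Theorem~\ref{teoMurthy} does not apply to $f|_W$ as stated. One then has to either pass to the genuinely forward-invariant set $\R^2\setminus\bigcup_{n\ge0}f^{-n}(\gamma)$ (whose topology must be controlled) or, in the universal-cover picture, show that the lifted map still carries each strip into itself --- the subtle point being that near the ``uncovered'' far end of a boundary line the image of a strip could a priori bleed into a neighbouring one. Establishing that confinement, using orientation-preservation of $f$ together with the monotonicity of $g$ from the first step, is where the real work sits; the remainder is bookkeeping.
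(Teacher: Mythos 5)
First, a point of reference: the paper does not actually prove Proposition~\ref{lemrayo} --- it is imported from \cite{Alarcon} --- so there is no internal proof to compare against. The closest analogue is the paper's proof of Proposition~\ref{proprayoS}(a), which uses exactly your two ingredients (one-dimensional monotonicity on the invariant set, Murthy's theorem on its complement), but in a situation where the invariant set is a full line whose complementary half-planes really are forward-invariant, and where accumulation on the invariant set away from $0$ is excluded by a periodic-disk-chain argument (Murthy's Theorem~3.3) rather than by covering spaces.

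Your analysis on the ray is correct, and in the case $f(\gamma)=\gamma$ the application of Theorem~\ref{teoMurthy} to $W=\R^2\setminus\gamma$ is legitimate (the remaining step --- ruling out accumulation on $\gamma\setminus\{0\}$ --- still needs to be written out; the disk-chain argument used in Proposition~\ref{proprayoS} is a more direct finish than your universal-cover sketch, and in this case the strips do map into themselves because the boundary lines map \emph{onto} themselves). The genuine gap is the one you yourself flag: the hypothesis is only $f(\gamma)\subset\gamma$, and when $g$ is not surjective (e.g.\ $g(t)=t/(1+t)$, perfectly compatible with a planar embedding; note also that $g(t)<t$ alone does not force non-surjectivity --- $g(t)=t/2$ is onto --- so the correct case split is on surjectivity of $g$, not on the sign of $g(t)-t$), points of $W$ can be mapped onto $\gamma\setminus f(\gamma)$, $W$ is not forward-invariant, Theorem~\ref{teoMurthy} does not apply to $f|_W$, and in the cover the lifted boundary lines map onto proper sub-rays of themselves, so nothing yet prevents the image of a strip from leaking past their ``uncovered'' ends. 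Resolving this is not bookkeeping; it is the actual content of the proposition, and it is precisely the case that arises when the origin attracts along the ray --- the case the paper needs for its global-attractor theorems. As it stands, the proposal establishes the statement only under the additional hypothesis $f(\gamma)=\gamma$.
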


The relation between the stability of the origin and the admissible forms of the Jacobian at that point is well known, but it is not usually clear that it holds for continuous maps that are
not necessarily $C^1$. The precise hypotheses are stated in the
following result, and its proof is given in Appendix~\ref{ApendiceEstabilidade}.

\begin{proposition}\label{Liapunov}
Let $U \subset \R^N$ be an open set containing the origin and let $f: \; \R^N \rightarrow \R^N$ be a continuous map, differentiable at the origin and such that $f(0)=0$. If all the eigenvalues of $Df(0)$ have norm strictly smaller than one then the origin is locally assymptotically stable. If all the eigenvalues of $Df(0)$ have norm strictly greater than one then the origin is a local repellor.
\end{proposition}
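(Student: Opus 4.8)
\emph{Plan.} I would prove both statements by the standard linearisation argument, carried out with an \emph{adapted} (Lyapunov) norm. First recall the classical fact, obtained from the Jordan canonical form, that for any linear map $A$ on $\R^N$ and any $\varepsilon>0$ there is a norm $\norm{\cdot}_*$ on $\R^N$ whose associated operator norm satisfies $\norm{A}_*\le\rho(A)+\varepsilon$, where $\rho(A)$ denotes the spectral radius of $A$. Putting $A=Df(0)$, differentiability at the origin together with $f(0)=0$ gives $f(x)=Ax+R(x)$ with $\norm{R(x)}/\norm{x}\to0$ as $x\to0$; since all norms on $\R^N$ are equivalent, the same estimate holds with $\norm{\cdot}_*$ in place of $\norm{\cdot}$.

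For the attractor statement one has $\rho(A)<1$, so I would choose $\norm{\cdot}_*$ with $\norm{A}_*=:\theta_0<1$ and then pick $\delta>0$ small enough that $\norm{R(x)}_*\le\tfrac12(1-\theta_0)\norm{x}_*$ for $\norm{x}_*\le\delta$; this yields $\norm{f(x)}_*\le\theta\norm{x}_*$ with $\theta=\tfrac12(1+\theta_0)<1$ on that ball. Consequently small closed $\norm{\cdot}_*$-balls are forward invariant and $\norm{f^n(x)}_*\le\theta^n\norm{x}_*\to0$ on them, so the origin is a local attractor; and given a neighbourhood $U$ of $0$ one takes $V$ to be a sufficiently small open $\norm{\cdot}_*$-ball contained in $U$, which satisfies $f(V)\subset V\subset U$, so the origin is stable. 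Hence it is locally asymptotically stable.

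For the repellor statement all eigenvalues of $A$ have modulus $>1$, so $A$ is invertible and $\rho(A^{-1})<1$; applying the first fact to $A^{-1}$ produces a norm with $\norm{A^{-1}}_*<1$, whence $\norm{Ax}_*\ge\mu\norm{x}_*$ for some $\mu>1$ and all $x$. The same perturbation argument then gives $\norm{f(x)}_*\ge\lambda\norm{x}_*$ with $\lambda>1$ on a ball $V=\{\norm{x}_*<\delta\}$. A nonzero orbit that stayed in $V$ for all time would satisfy $\norm{f^n(p)}_*\ge\lambda^n\norm{p}_*\to\infty$, which is impossible; so every nonzero orbit must leave $V$, and since $\norm{\cdot}_*$ is strictly increasing along an orbit as long as the orbit stays in $\overline V$, this forces $\omega(p)\not\subset V$. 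Thus the origin is a local repellor.

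\emph{Main obstacle.} The estimates are routine once the adapted norm is in hand, so the load-bearing step is really that classical norm-adaptation lemma (and keeping track that mere differentiability at $0$, not $C^1$, is used). The other delicate point is matching the conclusions to the precise definitions in the paper — stability via the neighbourhood formulation and, above all, the repellor property via $\omega$-limit sets: one must handle orbits that re-enter $V$, and note that for a non-injective $f$ an orbit could in principle reach $0$ from outside $V$, a degenerate case that does not occur for the planar embeddings to which the proposition is applied.
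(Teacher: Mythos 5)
Your proposal is correct and takes essentially the same route as the paper: the paper proves an adapted-norm lemma (an inner product in which $\alpha\norm{x}\le\norm{Ax}\le\beta\norm{x}$ for all eigenvalue bounds $\alpha,\beta$, after Hirsch--Smale) and then writes $f(x)=Df(0)x+r(x)$ with $\norm{r(x)}/\norm{x}\to 0$ to obtain a uniform contraction, respectively expansion, on a small ball --- exactly your computation, with your use of $\norm{A^{-1}}_{*}<1$ playing the role of their two-sided spectral bound. The paper's proof in fact stops at the contraction/expansion estimates, so your additional passage from uniform expansion to the $\omega$-limit formulation of ``local repellor'' (including the caveat about orbits re-entering $V$ and about non-injective maps hitting $0$) supplies more detail on that final step than the paper itself does.
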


\subsection{Equivariant Planar Maps}

Let $\Gamma$ be a compact Lie group acting on $\R^2$. The
following definitions and results are taken from Golubitsky {\em
et al.} \cite{golu2},  especially Chapter~XII,  to which we refer
the reader interested in further detail.

Given a map $f:\R^2\longrightarrow\R^2$,
we say that $\gamma \in \Gamma$ is a \emph{symmetry} of $f$ if $f(\gamma x)=\gamma f(x)$.
We define the \emph{symmetry group} of $f$ as the smallest closed subgroup of $GL(2)$ containing all the symmetries of $f$. It will be denoted by $\Gamma_f$.

We say that $f:\R^2\to \R^2$ is  \emph{$\Gamma-$equivariant} or that $f$ {\em commutes} with $\Gamma$ if
$$
f(\gamma x)=\gamma f(x) \quad \text{ for all }\quad \gamma \in \Gamma.
$$

It follows that every map $f:\R^2\to \R^2$ is equivariant under the action of its symmetry group, that is, $f$ is $\Gamma_f-$equivariant. 
We are interested in maps having a non-trivial symmetry group, $\Gamma_f\subset GL(2)$.

Let  $\Sigma$ be a subgroup of $\Gamma$. The {\em fixed-point subspace} of $\Sigma$ is
$$
\Fix (\Sigma) =\{p\in \R^2: \sigma p=p \; \text{ for all } \; \sigma \in \Sigma\}.
$$
If $\Sigma$ is generated by a single element $\sigma \in \Gamma$, we write \emph{$\Fix\langle\sigma\rangle$} instead of $\Fix (\Sigma)$.

We note that, for each subgroup $\Sigma$ of $\Gamma$, $\Fix (\Sigma)$ is invariant by the dynamics of a $\Gamma-$equivariant map (\cite{golu2}, XIII, Lemma 2.1).

For a group $\Gamma$ acting on $\R^2$ a non trivial fixed point subspace arises when $\Gamma$ contains a reflection. By a linear change of coordinates we may take the reflection to be the {\em flip}
    $$
    \kappa . (x,y) = (x, -y) .
    $$

\section{Symmetric Global Dynamics} \label{secLocalGlobal}

In this section we study the global dynamics of a symmetric discrete dynamical system with a unique and hyperbolic attracting fixed point.
We establish conditions for the hyperbolic local dynamics 
to become global dynamics.
For most symmetry groups, the local dynamics is restricted to either an attractor or a repellor.
Saddle points only occur for very small symmetry groups and therefore the study of these points does not depend so much on symmetry issues and requires additional  tools.
As pointed out before, this will be the object of a separate article.

We  address the dynamics when the groups involved possess an element acting as a reflection (flip). This is the main result of this article. In this case, we make use of the fact that there exists an invariant ray for the dynamics (either of $f$ itself or of $f^2$) from which results follow.

We begin with two convenient results.
Although the next lemma is only required to hold for planar maps, we present it for $\R^N$, as the proof is the same.

Let $p\in \R^N$ and $f:\R^N \to \R^N$ be a continuous map.
We denote by $\omega_2(p)$  the $\omega-$limit of $p$ with respect to $f^2$, given by
 $$
 \omega_2(p)=\{q\in \R^N : \lim \;f^{2n_k}(p)=q, \; \text{ for some sequence} \; n_k\to \infty \} .
 $$

\begin{lemma} \label{lemOmega2} Let $f\in \Emb(\R^N)$  be such that $f(0)=0$.

For $p\in \R^N$,

\begin{enumerate}
\item[a)]  if $\omega_2(p)=\{0\}$, then   $\omega(p)=\{0\}$;
\item[ b)] if $\omega_2(p)=\infty$,   then
$\omega(p)=\infty$.
\end{enumerate}

\end{lemma}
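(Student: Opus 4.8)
The plan is to split the forward orbit $\{f^n(p)\}_{n\ge0}$ into its even iterates $\{f^{2n}(p)\}_{n\ge0}$ --- which form the $f^2$-orbit of $p$, and whose asymptotics are governed by $\omega_2(p)$ --- and its odd iterates $\{f^{2n+1}(p)\}_{n\ge0}$, which are the image under $f$ of the even ones. Since a sequence in $\R^N$ converges to a point (respectively, has norm tending to $\infty$) precisely when both its even-indexed and odd-indexed subsequences do, it is enough to transfer the hypothesis on the even iterates to the odd iterates.

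For (a), the transfer is immediate: assuming $\omega_2(p)=\{0\}$, i.e.\ $f^{2n}(p)\to0$, continuity of $f$ at $0$ together with $f(0)=0$ gives $f^{2n+1}(p)=f\!\left(f^{2n}(p)\right)\to0$, hence $f^n(p)\to0$ and $\omega(p)=\{0\}$.

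For (b), I would argue by contradiction. Assuming $\omega_2(p)=\infty$, i.e.\ $\bigl\|f^{2n}(p)\bigr\|\to\infty$, suppose the odd iterates do not tend to $\infty$; then some subsequence $f^{2n_k+1}(p)$ is bounded, and (passing to a further subsequence) converges to some $q\in\R^N$. Applying $f$ once more brings us back to even iterates: $f^{2(n_k+1)}(p)=f\!\left(f^{2n_k+1}(p)\right)\to f(q)$ by continuity, contradicting $\bigl\|f^{2m}(p)\bigr\|\to\infty$. So the odd iterates also have norm tending to $\infty$, and therefore $\omega(p)=\infty$.

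The step I expect to need the most care is (b): one cannot pass directly from $\bigl\|f^{2n}(p)\bigr\|\to\infty$ to $\bigl\|f\!\left(f^{2n}(p)\right)\bigr\|\to\infty$, because an embedding of $\R^N$ need not be proper --- it may compress a sequence escaping to infinity into a bounded region. Applying $f$ a second time, so as to return to the even subsequence whose divergence is already known, is what circumvents this. (Injectivity of $f$ is not actually used, so the conclusion holds for any continuous $f$ fixing the origin.)
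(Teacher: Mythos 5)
Your part b) is correct and is essentially the paper's own argument: assume a bounded subsequence of odd iterates, extract a convergent one, apply $f$ once more to land back in the even iterates and contradict $\norm{f^{2n}(p)}\to\infty$. (Note the paper indeed defines $\omega_2(p)=\infty$ as norm divergence, so your reading there is the right one.)

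Part a), however, has a genuine gap. The paper defines $\omega_2(p)$ as the set of subsequential limits of the even orbit, so the hypothesis $\omega_2(p)=\{0\}$ says only that $0$ is the unique \emph{finite} accumulation point of $\{f^{2n}(p)\}$; since $\R^N$ is not compact, this does \emph{not} imply $f^{2n}(p)\to 0$. The even orbit could, say, return close to $0$ along one subsequence while another subsequence escapes to infinity, and then your continuity-at-$0$ argument says nothing about the odd iterates that follow the far-away even iterates: a non-proper embedding can send those distant points into a bounded region, producing a nonzero accumulation point $r\in\omega(p)$. This is exactly the case the paper's proof is designed to exclude, and it is where injectivity enters: any nonzero $r\in\omega(p)$ must be approached along odd iterates (even ones would force $r\in\omega_2(p)=\{0\}$), hence $f(r)\in\omega_2(p)=\{0\}$, i.e.\ $f(r)=0=f(0)$ with $r\neq 0$, contradicting injectivity. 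Your closing remark that injectivity is not used is a symptom of the stronger reading you adopted; under the paper's definition it is precisely the hypothesis that handles the unbounded case. The distinction matters for how the lemma is used: in Proposition \ref{proprayoS}(b) the input comes from Proposition \ref{lemrayo} (trivial dynamics), which only yields the limit-set statement $\omega_2(p)\subset\Fix(f^2)=\{0\}$, not convergence of the even orbit, so the lemma must be proved in the set-theoretic form. Your argument for a) proves a correct but different statement ($f^{2n}(p)\to0$ implies $f^{n}(p)\to0$) and should be replaced by, or supplemented with, the accumulation-point argument above.
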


\begin{proof} Let $p\in \R^N$.

$a)$ Suppose $\omega_2(p)=\{0\}$ and suppose also that $\omega(p)\neq\{0\}$.
Then there exists an $r\neq 0$ such that
$r\in\omega(p)$. In that case $r=\lim f^{n_k}(p)$, so there exists
a $k_0\in \N$ such that $\forall k>k_0$, $n_k$ is odd because
$\omega_2(p)=\{0\}$. Then, $f(r)=\lim f^{n_k+1}(p)$ with $n_k +1$
even. So $f(r)\in \omega_{2}(p) $ hence $f(r)=0$ with $r\neq 0$,
which is impossible because $f$ is an injective map such that $f(0)=0$.

$b)$ Suppose now $\omega_2(p)= \infty$ and also that
$\omega(p)\neq\infty$. Then there exists an $r\in
\R^N$ such that $r\in\omega(p)$. In that case $r=\lim f^{n_k}(p)$,
so there exists a $k_0\in \N$ such that $\forall k>k_0$, $n_k$ is
odd because  $\omega_2(p)=\infty$. Then, $f(r)=\lim
f^{n_k+1}(p)$ with $n_k +1$ even. So $f(r)\in \omega_{2}(p)$ which is impossible, since $\omega_2(p)=\infty$.
\end{proof}

\begin{lemma} \label{lemgdim1} Let $g:[0,1) \to [0,1)$ be a continuous and injective map such that $Fix(g)=\{0\}$. The following holds:
\begin{itemize}
\item[$a)$] If $0$ is a local attractor for $g$, then $0$ is a global attractor for $g$.
\item[$b)$] If $0$ is a local repellor for $g$, then $0$ is a global repellor for $g$. \end{itemize}
\end{lemma}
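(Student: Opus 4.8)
The plan is to reduce everything to the elementary theory of monotone maps of an interval. A continuous injective map of an interval is strictly monotone, and the constraints $g(0)=0$ and $g\bigl([0,1)\bigr)\subset[0,1)$ force $g$ to be strictly \emph{increasing}: if $g$ were decreasing we would have $g(x)<g(0)=0$ for every $x>0$, contradicting $g(x)\ge 0$. Since $\Fix(g)=\{0\}$, the continuous function $x\mapsto g(x)-x$ does not vanish on $(0,1)$ and hence keeps a constant sign there. So exactly one of the following holds: $g(x)<x$ for all $x\in(0,1)$, or $g(x)>x$ for all $x\in(0,1)$.

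For part $a)$, I would first observe that the hypothesis that $0$ is a local attractor excludes the case $g(x)>x$ on $(0,1)$: in that case the orbit of any $x_0\in(0,1)$ satisfies $x_0<g(x_0)<g^2(x_0)<\cdots$, so it is bounded below by $x_0>0$ and cannot converge to $0$, contradicting the fact that an open neighbourhood of $0$ lies in the basin $\U$. Hence $g(x)<x$ on $(0,1)$, and then for every $x_0\in(0,1)$ the orbit $\{g^n(x_0)\}_n$ is strictly decreasing and bounded below by $0$; it therefore converges to some $L\ge 0$, and continuity of $g$ gives $g(L)=L$, so $L\in\Fix(g)=\{0\}$. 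Thus $\omega(x_0)=\{0\}$ for every $x_0$, that is, $\U=[0,1)$ and $0$ is a global attractor.

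For part $b)$, the argument is the mirror image. The hypothesis that $0$ is a local repellor excludes the case $g(x)<x$ on $(0,1)$, because, by the computation just made, that case would make $0$ a global, in particular local, attractor, so a nonzero point near $0$ would have $\omega$-limit $\{0\}$, contradicting the defining property of a local repellor. Hence $g(x)>x$ on $(0,1)$, and for every $x_0\in(0,1)$ the orbit $\{g^n(x_0)\}_n$ is strictly increasing and bounded above by $1$, hence converges to some $L\in(0,1]$. If $L<1$, continuity of $g$ at $L$ would give $g(L)=L$ with $L>0$, contradicting $\Fix(g)=\{0\}$; therefore $L=1$ and $g^n(x_0)\to 1$ as $n\to\infty$. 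So the orbit of every point other than $0$ escapes to the boundary point $1$ of $[0,1)$, the one-dimensional counterpart of $\omega(p)=\infty$, which is exactly the assertion that $0$ is a global repellor.

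The proof is short once the monotonicity dichotomy $g(x)<x$ versus $g(x)>x$ is established; the only place that needs a little care is checking, against the paper's definitions of local attractor and local repellor, that an orbit bounded away from $0$ (respectively an orbit converging to $0$) genuinely contradicts the relevant hypothesis. This bookkeeping is the main, and rather mild, obstacle; the remaining steps are routine monotone-sequence arguments.
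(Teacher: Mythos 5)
Your proof is correct and follows essentially the same route as the paper: continuity plus injectivity (and the range constraint) force $g$ to be strictly increasing, $\Fix(g)=\{0\}$ gives the sign dichotomy of $g(x)-x$ on $(0,1)$, the local hypothesis selects the admissible sign, and monotone orbits then yield the global conclusion. You merely spell out the monotone-sequence convergence details that the paper leaves implicit, which is a welcome but not substantively different addition.
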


\begin{proof} Assume $0$ is a local attractor. Since $g$ is a continuous map, $g$ is increasing at $0$. Because $g$ is injective, $g$ is increasing in $[0,1)$. Moreover, $\Fix(g)=\{0\}$ so the graph of $g$ does not cross the diagonal of the first quadrant and one of the following holds:

\begin{itemize} \item[i)] $g(x)>x$, for all $x\in (0,1)$;
\item[ii)] $g(x)<x$, for all $x\in (0,1)$.
\end{itemize}

Only $ii)$ can happen when $0$ is a local attractor. Then $g(x)<x$, for all $x\in [0,1)$ and $0$ is a global attractor for $g$.

The proof of b) follows in a similar fashion.
\end{proof}

\medskip

We now proceed to use the existence of an invariant ray to obtain information concerning the dynamics.

\begin{lemma} \label{lemFixdim1} Let $f:\R^2\to \R^2$ be a map with symmetry group $\Gamma$. If $\kappa \in \Gamma$, then $\Fix \langle\kappa\rangle$ is an  $f-$invariant line. Moreover, $\Fix \langle\kappa\rangle$ contains an $f^2-$invariant ray.
\end{lemma}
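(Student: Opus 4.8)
The statement has two parts. For the first part, I would observe that since $\kappa \in \Gamma = \Gamma_f$, the map $f$ is $\langle\kappa\rangle$-equivariant, so by the cited Lemma 2.1 of \cite{golu2} (Chapter XIII), the fixed-point subspace $\Fix\langle\kappa\rangle$ is invariant under the dynamics of $f$, i.e. $f(\Fix\langle\kappa\rangle)\subset \Fix\langle\kappa\rangle$. Next I would identify $\Fix\langle\kappa\rangle$ explicitly: after the linear change of coordinates that puts the reflection in the standard form $\kappa.(x,y)=(x,-y)$, we have $\Fix\langle\kappa\rangle=\{(x,0):x\in\R\}$, which is a line. So $\Fix\langle\kappa\rangle$ is an $f$-invariant line, as claimed.

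For the second part I need to produce an $f^2$-invariant ray inside this line. Write $L=\Fix\langle\kappa\rangle\cong\R$ and let $g=f|_L:\R\to\R$ be the restriction; it is continuous and injective (as $f\in\Emb(\R^2)$), and since $\Fix(f)=\{0\}$ (the standing hypothesis of this section; here $0\in L$) we have $\Fix(g)=\{0\}$. A continuous injective self-map of $\R$ is strictly monotone. If $g$ is increasing, then each of the two half-lines $[0,\infty)$ and $(-\infty,0]$ is $g$-invariant, hence $f$-invariant, and either one is already an $f$-invariant ray (so a fortiori $f^2$-invariant). If $g$ is decreasing, then $g$ swaps the two half-lines, but $g^2=f^2|_L$ is increasing and fixes only $0$, so $g^2$ leaves each half-line invariant; then $[0,\infty)\subset L$ is an $f^2$-invariant set containing $0$ and unbounded, parametrized by a topological embedding $\gamma:[0,\infty)\to\R^2$ with $\gamma(0)=0$ and $|\gamma(t)|\to\infty$, i.e. an $f^2$-invariant ray. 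In either case we obtain an $f^2$-invariant ray inside $\Fix\langle\kappa\rangle$.

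The only mild subtlety — hardly an obstacle — is the bookkeeping about orientation: $f$ itself need not map the chosen half-line into itself (precisely when $g$ is orientation-reversing on $L$), which is exactly why the statement claims an $f^2$-invariant ray rather than an $f$-invariant one. Passing to $f^2$ removes this issue since $g^2$ is always increasing. One should also note that the linear change of coordinates used to normalize $\kappa$ does not affect any of the dynamical conclusions, as it conjugates $f$ to a map with the same structure. This completes the plan.
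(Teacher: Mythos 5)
Your proposal is correct and follows essentially the same route as the paper: both invoke Lemma~2.1 and Theorem~2.3 of Chapter~XIII in \cite{golu2} to conclude that $\Fix\langle\kappa\rangle$ is a one-dimensional $f$-invariant subspace, and then take one of its two half-lines as the $f^2$-invariant ray, with your write-up merely making explicit the monotonicity argument for $f$ restricted to this line that the paper leaves implicit. One minor remark: your appeal to the standing hypothesis $\Fix(f)=\{0\}$ (to get $g(0)=0$ for $g=f|_{\Fix\langle\kappa\rangle}$) is not actually needed --- once $g$ is injective and continuous, $g^2$ is strictly increasing, and the half-line lying on the same side of the origin as $g^2(0)$ is automatically $g^2$-invariant, which matches the paper's choice of ``one of the two half-lines.''
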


\begin{proof} By Lemma XIII, $2.1$ and Theorem XIII, $2.3$ in \cite{golu2}, $\Fix \langle\kappa\rangle$ is a vector subspace of dimension one such that $f(\Fix \langle\kappa\rangle) \subseteq \Fix \langle\kappa\rangle$. Let $\gamma$ denote one of the two half-lines in $\Fix \langle\kappa\rangle$, then $\gamma$ is an $f^2-$invariant ray.
\end{proof}

The next proposition describes the admissible $\omega$-limit set of a point and is essential for the main results.

\begin{proposition} \label{proprayoS} Let $f\in \Emb(\R^2)$ have symmetry group $\Gamma$ with $\kappa \in \Gamma$, such that $\Fix(f)=\{0\}$. Suppose one of the followings holds:

\begin{itemize}
\item[$a)$] $f\in \Emb^ {+}(\R^2)$ and $f$ does not interchange connected components of $\R^2 \setminus \Fix \langle\kappa\rangle$.
\item[$b)$]  $\Fix(f^ 2)=\{0\}$.
\end{itemize}
Then for each $p\in \R^2$ either $\omega(p)=\{0\}$ or $\omega(p)=\infty$.
\end{proposition}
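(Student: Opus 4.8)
The plan is to handle the two cases by reducing the dynamics of $f$ (or of $f^2$) to a situation where we can invoke Proposition~\ref{lemrayo}, using the invariant line $\Fix\langle\kappa\rangle$ supplied by Lemma~\ref{lemFixdim1}. First I would recall that, by Lemma~\ref{lemFixdim1}, $L:=\Fix\langle\kappa\rangle$ is an $f$-invariant line through the origin, and each of the two half-lines $\gamma$ composing $L\setminus\{0\}$ is an $f^2$-invariant ray with $\gamma(0)=0$ and $|\gamma(t)|\to\infty$.

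In case $a)$, $f\in\Emb^{+}(\R^2)$ does not interchange the two connected components of $\R^2\setminus L$. Since $f(L)\subseteq L$ and $f$ fixes the origin, $f$ must map each half-line of $L$ into $L$; because $f$ is injective and does not swap the two sides of $L$, it in fact maps each half-line $\gamma$ into itself, so each such $\gamma$ is an $f$-invariant ray. Then $f\in\Emb^{+}(\R^2)$ has $\Fix(f)=\{0\}$ and admits an $f$-invariant ray, so Proposition~\ref{lemrayo} applies directly and $f$ has trivial dynamics; that is, for every $p$, either $\omega(p)\subset\Fix(f)=\{0\}$, i.e.\ $\omega(p)=\{0\}$, or $\omega(p)=\infty$.

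In case $b)$ we pass to $g:=f^2$. Note $g\in\Emb(\R^2)$, and since $g$ has the same symmetry group, it has the same invariant line $L$; moreover $\Fix(g)=\{0\}$ by hypothesis. The subtlety is that $g$ need not be orientation-preserving \emph{a priori} — but $g=f^2$ is a square, hence $g\in\Emb^{+}(\R^2)$ regardless of whether $f$ preserves or reverses orientation. Also, since $g$ maps the line $L$ into itself and $g=f\circ f$, the map $g$ cannot interchange the two components of $\R^2\setminus L$: if $f$ preserves the sides, so does $g$; if $f$ swaps them, $g$ swaps them twice, hence preserves them. Thus $g$ maps each half-line $\gamma\subset L$ into itself, giving a $g$-invariant ray, and Proposition~\ref{lemrayo} applies to $g$: for every $p$, either $\omega_2(p)=\{0\}$ or $\omega_2(p)=\infty$. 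Finally, Lemma~\ref{lemOmega2} upgrades this to the corresponding statement for $f$ itself: $\omega_2(p)=\{0\}$ forces $\omega(p)=\{0\}$, and $\omega_2(p)=\infty$ forces $\omega(p)=\infty$.

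The main obstacle, and the point requiring care, is the verification that in each case the relevant map sends each half-line of $L$ \emph{into itself} rather than merely into $L$ — i.e.\ controlling the "non-interchange of components" hypothesis and translating it into invariance of a genuine ray. In case $a)$ this is exactly the hypothesis; in case $b)$ it follows automatically because $f^2$ acts on the two sides of $L$ by the square of a permutation of a two-element set, which is always the identity. Once the invariant ray is in hand, the conclusion is immediate from Proposition~\ref{lemrayo} (plus Lemma~\ref{lemOmega2} in case $b)$).
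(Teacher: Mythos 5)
Your case $b)$ is exactly the paper's argument (Lemma~\ref{lemFixdim1} gives an $f^2$-invariant ray, $f^2\in\Emb^{+}(\R^2)$ with $\Fix(f^2)=\{0\}$, Proposition~\ref{lemrayo} applied to $f^2$, then Lemma~\ref{lemOmega2}), so no issue there. The problem is case $a)$: your whole argument rests on the claim that $f$ maps each half-line of $\Fix\langle\kappa\rangle\setminus\{0\}$ into itself, and you justify it by ``because $f$ is injective and does not swap the two sides of $L$''; at the end you even say this is ``exactly the hypothesis''. It is not: the hypothesis concerns the two components of $\R^2\setminus\Fix\langle\kappa\rangle$, not the two half-lines of the invariant line, and injectivity plus preservation of the sides does not imply preservation of the half-lines. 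The linear map $h(x,y)=(-2x,\,y/2)$ is $\kappa$-equivariant, injective, has $\Fix(h)=\{0\}$, preserves each open half-plane, and yet interchanges the two half-lines of the $x$-axis. It is of course orientation-reversing, so it does not contradict your claim under the full hypothesis $f\in\Emb^{+}(\R^2)$; but it shows that the reason you give is insufficient and that the step genuinely requires an orientation argument (e.g.\ if $f$ reversed the half-lines while preserving each half-plane, the image of a small positively oriented circle around $0$ would wind $-1$ around $0=f(0)$, contradicting orientation preservation; equivalently, an orientation-preserving embedding of the closed half-plane into itself taking boundary into boundary preserves the boundary orientation). As written, the key step of case $a)$ is asserted rather than proved, and it is precisely the step that cannot be dismissed as ``the hypothesis''.

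It is worth noting that once this step is supplied, your case $a)$ is a genuinely different and shorter route than the paper's: you get an $f$-invariant ray and quote Proposition~\ref{lemrayo} directly, whereas the paper never proves half-line invariance for $f$ itself; instead it applies Murthy's Theorem~\ref{teoMurthy} to the restriction of $f$ to each half-plane to get $\Omega(f)\subseteq\Fix\langle\kappa\rangle$, handles points of the line via Lemma~\ref{lemgdim1} for $f^2$ together with Lemma~\ref{lemOmega2}, and then uses a separate fixed-point argument (Theorem 3.3 of Murthy) to rule out orbits off the line accumulating at nonzero points of the line. So either fill in the orientation argument for half-line invariance, or fall back on the paper's Murthy-based scheme; as it stands, case $a)$ has a real gap.
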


\begin{proof} Suppose $a)$ holds. By Lemma \ref{lemFixdim1}, $\R^2\setminus \Fix \langle\kappa\rangle$ is the disjoint union of two open subsets $U_1, U_2\subset \R^2$ homeomorphic to $\R^2$. Moreover,
 $f|_{U_{i}}: U_{i} \to U_{i}$  for  $i=1,2$ is an orientation preserving embedding without fixed points. Then by Theorem \ref{teoMurthy}, $\Omega(f|_{U_{i}})=\emptyset$ for $i=1,2$ and it then follows that $\; \Omega(f) \subseteq \Fix \langle\kappa\rangle$. \par

The subspace $\Fix \langle\kappa\rangle\setminus\{0\}$  is the disjoint union of two  subsets  homeomorphic to $(0,1)$.
Even if  $f$ interchanges these components,  $f^2$ does not.
Then applying  Lemma \ref{lemgdim1}  to the restriction of $f^2$ to each component, it follows that for $p\in \Fix \langle\kappa\rangle$ either $\omega_2(p)=0$ or $\omega_2(p)=\infty$.
By Lemma \ref{lemOmega2} , it follows that for $p\in \Fix \langle\kappa\rangle$ either $\omega(p)=0$ or $\omega(p)=\infty$.

Let $p\in \R^2 \setminus \Fix \langle\kappa\rangle $. Since $\; \Omega(f) \subseteq \Fix \langle\kappa\rangle$, we have that $\omega(p)\subseteq \Fix \langle\kappa\rangle$. We show next that $\omega(p)\neq \Fix \langle\kappa\rangle$.

Suppose there is an $r\in\omega(p)\cap ({\Fix \langle\kappa\rangle}\setminus\{0\})$ and an open
neighbourhood $K$ of $r$  such that $0\notin K$ and $\Fix \langle\kappa\rangle \cap K$ is an embedded
segment and $K\setminus\Fix \langle\kappa\rangle$ is the union of two disjoint disks
$W_1$ and $W_2$ homeomorphic to $\R^2$. Suppose without loss of generality that $p\in U_1$, then the positive orbit of $p$ accumulates at $r$ and this positive orbit meets $W_1$ infinitely many times. Since $r\in \Omega(f)\setminus \{0, \infty\}$ is not a fixed point, taking $K$ (and hence $W_1$) sufficiently small, there exists an open disk $V\subset W_1$  and a positive integer $n,$ with $n\ge 2$,
such that for some $s\in V,$ we have that $f^n(s)\in V$, while
$V\cap f^{\ell}(V)=\emptyset $, for $\ell=1,2,\dots, n-1$. Then, by Theorem 3.3 in \cite{Murthy}, $f$ has a fixed point in $V$ which contradicts the uniqueness of the fixed point. So the orbit of $p$ does not accumulate at $\Fix \langle\kappa\rangle$ and hence either $\omega(p)=0$ or $\omega(p)=\infty$.

Suppose $b)$ holds. By Lemma \ref{lemFixdim1} there exists an $f^2-$invariant ray $ \gamma \subset \Fix \langle\kappa\rangle$. Moreover, $f^2\in \Emb^{+}(\R^2)$ and $\Fix(f^2)=\{0\}$, so by Proposition~\ref{lemrayo} we have that for each $p\in \R^2$ either $\omega_2(p)=\{0\}$ or $\omega_2(p)=\infty$ and therefore, by Lemma \ref{lemOmega2}, either $\omega(p)=\{0\}$ or $\omega(p)=\infty$.
\end{proof}

The next example shows that assumption $b)$ in Proposition \ref{proprayoS} is necessary in the case where $f$ interchanges connected components of $\R^2 \setminus \Fix \langle\kappa\rangle$.

\begin{example} Consider the map $f: \R^2 \to \R^2$ defined by
$$
f(x,y)=\left(-ax^3+(a-1)x,-\frac{y}{2}\right) \quad 0<a<1.
$$
It is easily checked that $f$ has symmetry group $D_2$ and verifies (see Figure \ref{figureattractor}):
\begin{enumerate}
\item $f\in \Emb^{+}(\R^2)$ is an orientation-preserving diffeomorphism.
\item $\spec(f)\cap [0, \infty)=\emptyset$.
\item $0$ is a local hyperbolic attractor.
\item $\Fix(f^2)\neq \{0\}$.
\end{enumerate}
\begin{figure}[hh]
\centering
\includegraphics[scale=0.5]{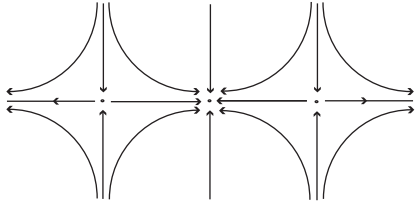}
\caption{A local attractor which is not a global attractor due to the existence of periodic orbits.} \label{figureattractor}
\end{figure}
\end{example}

The main results, Theorems \ref{proprayoS1} and \ref{proprayoS2}, are obtained as a direct consequence of Proposition \ref{proprayoS} under additional assumptions.

We say that a map $f$ is \emph{dissipative} if there is an open set $V$ such that $\R^2\backslash V$ is compact
and $\omega(p)\notin V$ for all $p\in\R^2$.

\begin{theorem} \label{proprayoS1} Let $f\in \Emb(\R^2)$ be dissipative with symmetry group $\Gamma$ with $\kappa \in \Gamma$ such that $\Fix(f)=\{0\}$. Suppose in addition that one of the following holds:

\begin{itemize}
\item[$a)$] $f\in \Emb^ {+}(\R^2)$ and $f$ does not interchange connected components of $\R^2 \setminus \Fix \langle\kappa\rangle$.
\item[$b)$]  There exist no $2-$periodic orbits.
\end{itemize}
Then $0$ is a global attractor.

\end{theorem}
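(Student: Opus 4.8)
The plan is to deduce the statement directly from Proposition~\ref{proprayoS}, after checking its hypotheses in each of the two alternatives, and then to use dissipativity to discard the ``escaping to infinity'' case.

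First I would verify that Proposition~\ref{proprayoS} applies. The standing hypotheses of the theorem ($f\in\Emb(\R^2)$, symmetry group $\Gamma$ with $\kappa\in\Gamma$, and $\Fix(f)=\{0\}$, which in particular forces $f(0)=0$) coincide with the standing hypotheses there. In case $a)$ the extra assumption is verbatim that of part $a)$ of the proposition, so there is nothing further to do. In case $b)$ I would argue that $\Fix(f^2)=\{0\}$: every point of $\Fix(f^2)$ is either a fixed point of $f$ or lies on a $2$-periodic orbit, and by hypothesis $\Fix(f)=\{0\}$ while there are no $2$-periodic orbits, so $\Fix(f^2)=\{0\}$, which is exactly hypothesis $b)$ of Proposition~\ref{proprayoS}. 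In both situations the proposition yields, for every $p\in\R^2$, that $\omega(p)=\{0\}$ or $\omega(p)=\infty$.

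It then remains to rule out $\omega(p)=\infty$. Let $V$ be the open set supplied by the definition of dissipativity, so that $\R^2\setminus V$ is compact and $\omega(p)\notin V$ for all $p\in\R^2$. Since $V$ is a neighbourhood of infinity, the possibility $\omega(p)=\infty$ is incompatible with $\omega(p)\notin V$: concretely, if $\norm{f^n(p)}\to\infty$ then the forward orbit of $p$ eventually leaves the compact set $\R^2\setminus V$ and remains in $V$ thereafter, so the asymptotic behaviour of the orbit cannot avoid $V$. Hence $\omega(p)=\{0\}$ for every $p\in\R^2$; equivalently the basin of attraction $\U$ equals $\R^2$, which is precisely the assertion that $0$ is a global attractor.

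I do not expect a genuine obstacle here, since the substantive work is carried out in Proposition~\ref{proprayoS} and this argument merely packages it. The only points deserving care are the elementary observation that the absence of $2$-periodic orbits together with $\Fix(f)=\{0\}$ forces $\Fix(f^2)=\{0\}$, and a careful reading of the definition of dissipativity so that ``$\omega(p)\notin V$'' is indeed seen to exclude an orbit escaping to infinity.
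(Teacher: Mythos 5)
Your proposal is correct and follows the same route as the paper, which deduces the theorem from Proposition~\ref{proprayoS} by noting that dissipativity excludes $\omega(p)=\infty$; your explicit check that no $2$-periodic orbits together with $\Fix(f)=\{0\}$ gives $\Fix(f^2)=\{0\}$ is a detail the paper leaves implicit but is exactly the intended bridge between hypothesis $b)$ of the theorem and hypothesis $b)$ of the proposition.
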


\begin{proof} Follows from 
Proposition \ref{proprayoS} since being dissipative excludes $\omega(p)=\infty$.
\end{proof}

\begin{corollary} Suppose the assumptions of Theorem \ref{proprayoS1}
are verified and $f$ is differentiable at $0$. If every eigenvalue of $Df(0)$ has norm strictly less than one, then $0$ is a global asymptotic attractor.
\end{corollary}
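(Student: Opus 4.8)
The plan is to combine the preceding Theorem~\ref{proprayoS1} with Proposition~\ref{Liapunov}, which is the differentiable-at-a-point version of the Lyapunov stability criterion established in Appendix~\ref{ApendiceEstabilidade}. The corollary adds just one hypothesis to Theorem~\ref{proprayoS1}, namely that $f$ is differentiable at $0$ with $\spec\big(Df(0)\big)$ inside the open unit disk, and it asks for the strictly stronger conclusion that $0$ is not merely a global attractor but an \emph{asymptotic} global attractor, i.e.\ a stable global attractor in the sense defined in Section~\ref{secPre}.

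First I would invoke Theorem~\ref{proprayoS1}: under the stated hypotheses (dissipative, $\Fix(f)=\{0\}$, $\kappa\in\Gamma$, and one of the alternatives $a)$ or $b)$), the origin is a global attractor, i.e.\ $\U=\{p\in\R^2:\omega(p)=\{0\}\}=\R^2$. It remains only to upgrade ``global attractor'' to ``asymptotic global attractor,'' which by the definitions in Section~\ref{secPre} means establishing Lyapunov stability of $0$: for every neighbourhood $U$ of $0$ there is a neighbourhood $V$ of $0$ with $f(V)\subset V\cap U$. For this I would apply Proposition~\ref{Liapunov} with $N=2$: since $f$ is differentiable at $0$, $f(0)=0$, and all eigenvalues of $Df(0)$ have norm strictly less than one, the proposition gives that $0$ is locally asymptotically stable --- in particular stable. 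Being simultaneously a stable fixed point and a global attractor, $0$ is by definition a globally asymptotically stable fixed point, equivalently an asymptotic global attractor, which is the desired conclusion.

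There is essentially no obstacle here: the corollary is a formal consequence of two results already in hand, and the only thing to check is that the hypotheses of Proposition~\ref{Liapunov} are met, which is immediate from the added assumption on $Df(0)$, and that the two notions of attractor match the definitions given in Section~\ref{secPre}. One small point worth a sentence is that the spectral condition $\abs{\lambda}<1$ for all $\lambda\in\spec\big(Df(0)\big)$ is automatically compatible with --- indeed it forces --- the local attractor hypothesis implicit in the phrase ``hyperbolic attracting fixed point,'' so there is no tension between the corollary's hypothesis and the framework of the section. The proof is therefore short: cite Theorem~\ref{proprayoS1} for global attraction, cite Proposition~\ref{Liapunov} for stability, and combine.
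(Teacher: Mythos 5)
Your proof is correct and follows exactly the paper's own route: the paper's proof is the one-line "Follows by Theorem~\ref{proprayoS1} and Proposition~\ref{Liapunov}," and you have simply spelled out the same combination (global attraction from the theorem, Lyapunov stability from the proposition) in more detail.
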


\begin{proof} Follows by Theorem \ref{proprayoS1} and Proposition \ref{Liapunov}.
\end{proof}

\begin{theorem} \label{proprayoS2} Let $f\in \Emb(\R^2)$ be a
map with symmetry group $\Gamma$ with $\kappa \in \Gamma$ such that $\Fix(f)=\{0\}$. Suppose in addition that one of the following holds:

\begin{itemize}
\item[$a)$] $f\in \Emb^ {+}(\R^2)$ and $f$ does not interchange connected components of $\R^2 \setminus Fix\langle\kappa\rangle$.
\item[$b)$] There exist no $2-$periodic orbits.
\end{itemize}
Then, if $0$ is a local repellor, then $0$ is a global repellor.

\end{theorem}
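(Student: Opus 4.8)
The plan is to obtain this statement as a direct consequence of Proposition~\ref{proprayoS}, in exact parallel with Theorem~\ref{proprayoS1}; the only structural change is that the role played there by dissipativity (which excluded $\omega(p)=\infty$) is now played by the local repellor hypothesis (which will exclude $\omega(p)=\{0\}$ for $p\neq0$).

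First I would verify that the hypotheses here imply those of Proposition~\ref{proprayoS}. Case $a)$ is verbatim the same. For case $b)$, I claim that the absence of $2$-periodic orbits together with $\Fix(f)=\{0\}$ yields $\Fix(f^2)=\{0\}$: if $f^2(p)=p$, then either $f(p)=p$, forcing $p=0$, or else $\{p,f(p)\}$ is a $2$-periodic orbit, which is excluded. Hence Proposition~\ref{proprayoS} applies and gives, for every $p\in\R^2$, the dichotomy $\omega(p)=\{0\}$ or $\omega(p)=\infty$.

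Next I would use that $0$ is a local repellor to discard the first alternative whenever $p\neq0$. By definition there is a neighbourhood $V$ of $0$ with $\omega(p)\not\subseteq V$ for every $p\neq0$; since $\{0\}\subseteq V$, no nonzero $p$ can satisfy $\omega(p)=\{0\}$. (Should one read the local repellor condition as constraining only points near $0$, the same conclusion follows because $\omega(p)=\{0\}$ would force some forward iterate $q$ of $p$ into $V\setminus\{0\}$, and then $\omega(q)=\omega(p)=\{0\}\subseteq V$ is a contradiction.) Therefore $\omega(p)=\infty$ for every $p\neq0$, i.e.\ $0$ is a global repellor.

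I do not anticipate a genuine obstacle: all the real work is contained in Proposition~\ref{proprayoS}, and what remains is the routine translation of hypothesis $b)$ into $\Fix(f^2)=\{0\}$ together with the bookkeeping around the repelling neighbourhood. One can moreover append the repellor analogue of the corollary to Theorem~\ref{proprayoS1}: if in addition $f$ is differentiable at $0$ and every eigenvalue of $Df(0)$ has norm strictly greater than one, then $0$ is a local repellor by Proposition~\ref{Liapunov}, hence a global repellor.
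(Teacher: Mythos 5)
Your proposal is correct and follows exactly the paper's route: the theorem is deduced directly from Proposition~\ref{proprayoS} (with the absence of $2$-periodic orbits plus $\Fix(f)=\{0\}$ giving $\Fix(f^2)=\{0\}$), and the local repellor hypothesis excludes the alternative $\omega(p)=\{0\}$, leaving $\omega(p)=\infty$ for all $p\neq 0$. The paper's proof is a one-line version of this same argument; your added bookkeeping just makes explicit what it leaves implicit.
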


\begin{proof} Follows from 
Proposition \ref{proprayoS}, since a local repellor excludes $\omega(p)=\{0\}$.
\end{proof}

\begin{corollary} Suppose the assumptions of Theorem \ref{proprayoS2}
are verified and $f$ is differentiable at $0$. If every eigenvalue of $Df(0)$ has norm strictly greater than one, then $0$ is a global asymptotic repellor.
\end{corollary}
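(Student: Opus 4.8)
The plan is to combine Proposition~\ref{Liapunov} with Theorem~\ref{proprayoS2} to obtain the global repellor property, and then to upgrade this to the asymptotic statement by passing to the local inverse of $f$ at the origin. Since every eigenvalue of $Df(0)$ has norm strictly greater than one, Proposition~\ref{Liapunov} gives that $0$ is a local repellor, and then Theorem~\ref{proprayoS2}, whose hypotheses are assumed, shows that $0$ is a global repellor. What remains is the additional clause in the definition of an asymptotically global repellor: for every neighbourhood $U$ of $0$ there must exist a neighbourhood $V$ of $0$ with $V\subset f(V)$ and $V\subset f(U)$.

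To produce such a $V$, I would use that $f\in\Emb(\R^2)$ is continuous, injective and open, hence a homeomorphism from some open neighbourhood $W$ of $0$ onto the open neighbourhood $f(W)$ of $f(0)=0$; write $g\colon f(W)\to W$ for the continuous local inverse. As all eigenvalues of $Df(0)$ are nonzero, $Df(0)$ is invertible, and $g$ is differentiable at $0$ with $Dg(0)=\bigl(Df(0)\bigr)^{-1}$, a matrix all of whose eigenvalues have norm strictly smaller than one. Applying the first part of Proposition~\ref{Liapunov} to $g$ (extended arbitrarily to a continuous self-map of $\R^2$, the conclusion being purely local), the origin is a stable local attractor for $g$. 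Shrinking neighbourhoods if necessary, one then finds, for the given $U$, a neighbourhood $V\subset U\cap f(W)$ of $0$ with $g(V)\subset V$. Since $f\circ g$ is the identity on $f(W)\supset V$, applying $f$ to $g(V)\subset V$ and to $g(V)\subset V\subset U$ yields $V=f(g(V))\subset f(V)$ and $V=f(g(V))\subset f(U)$, which is exactly what is required; hence $0$ is an asymptotically global repellor.

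The step that is not routine is the differentiability of the local inverse $g$ at the origin, because $f$ is assumed differentiable only at the single point $0$ and need not be $C^1$, so the classical inverse function theorem does not apply. Instead one runs the elementary direct estimate: from $f(x)=Df(0)x+o(|x|)$ and the invertibility of $Df(0)$ one obtains a lower bound $|f(x)|\ge c|x|$ near $0$, hence $|g(y)|\le C|y|$ near $0$; then, using that $g$ is continuous at $0$, $g(y)-Df(0)^{-1}y=Df(0)^{-1}\bigl(Df(0)g(y)-f(g(y))\bigr)=Df(0)^{-1}o(|g(y)|)=o(|y|)$. The remaining bookkeeping — keeping all neighbourhoods small enough that the compositions with $f$ and $g$ make sense and that the extension of $g$ agrees with $g$ on them — is straightforward. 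One could alternatively avoid $g$ by replacing the Euclidean norm with one adapted to $Df(0)^{-1}$ and showing directly that a small ball $V$ for that norm satisfies $V\subset f(V)$, but appealing to Proposition~\ref{Liapunov} through the inverse keeps this refinement within the framework already set up.
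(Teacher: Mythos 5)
Your proposal is correct, and its core is the same as the paper's: the paper's proof is the one-line observation that the statement follows from Proposition~\ref{Liapunov} (eigenvalues of norm greater than one give a local repellor) together with Theorem~\ref{proprayoS2} (local repellor plus the standing hypotheses give a global repellor). Where you go beyond the paper is in the explicit verification of the ``asymptotic'' clause ($V\subset f(V)$ and $V\subset f(U)$ for every neighbourhood $U$), which the paper leaves implicit: the statement of Proposition~\ref{Liapunov} only asserts ``local repellor'', but its proof in Appendix~\ref{ApendiceEstabilidade} actually establishes that $f$ is a uniform expansion in an adapted norm near $0$, and from $\norm{f(x)}\ge c\norm{x}$ with $c>1$, together with $f$ being an open injective map, the inclusion $V\subset f(V)$ for small balls $V$ follows directly. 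Your route instead passes to the local inverse $g$, proves its differentiability at $0$ by the direct $o(|x|)$ estimate (correctly handling the fact that $f$ is differentiable only at the origin, so the classical inverse function theorem is unavailable), and applies the attractor half of Proposition~\ref{Liapunov} to $g$; the bookkeeping $V=f(g(V))\subset f(V)$, $V\subset f(U)$ is sound. This is slightly more roundabout than quoting the expansion estimate from the proof of Proposition~\ref{Liapunov} (your own alternative remark), but it is rigorous and has the merit of using only the stated conclusions of the proposition rather than its internal estimates.
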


\begin{proof} Follows from Theorem \ref{proprayoS2} and Proposition \ref{Liapunov}.
\end{proof}

\section{Discussion: From Local to Global Symmetric Dynamics}\label{Discussion}

In this section we discuss  all possible groups of symmetries for a planar topological embedding,
and how this may be used to obtain global dynamics from local information near a unique fixed point. 
In our discussion of symmetries we need  only be concerned with compact subgroups of the orthogonal group $O(2)$,
since every compact Lie group in $GL(2)$ can be identified with one of them. For the convenience of the reader less familiar with symmetry, these are listed in Appendix \ref{subgroups}, together with their action on $\R^2$.
Finally we give a brief description of what happens for $SO(2)$ and  $\Z_n$, the groups that do not contain a flip, where the information about the local dynamics cannot be extended to global dynamics.

Since most of our results depend on the existence of a unique
fixed point for $f$, we assume also that the group action is {\em faithful}: for every 
$\gamma\in\Gamma$, different from the identity, there exists $x\in\R^2$ such that $\gamma x\ne x$.
Therefore $\Fix (\Gamma) = \{ 0
\}$ and hence, if $f$ is $\Gamma-$equivariant then $f(0)=0$. 

The Jacobian matrix of an equivariant map $f$ at the origin depends on its symmetries as follows:

\begin{lemma}\label{LemaDerivada}
Let $f: \; V \rightarrow V$ be a $\Gamma$-equivariant map
differentiable at the origin. Then $Df(0)$, the Jacobian of $f$ at
the origin, commutes with $\Gamma$.
\end{lemma}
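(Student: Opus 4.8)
The plan is to differentiate the equivariance relation $f(\gamma x)=\gamma f(x)$ at $x=0$, separately for each fixed $\gamma\in\Gamma$. Recall that $\Gamma$ acts linearly on $V$, so each $\gamma$ is a linear isomorphism of $V$ — in particular differentiable everywhere with $D\gamma=\gamma$ — and, since $\Fix(\Gamma)=\{0\}$ forces the origin to be fixed by the action, $\gamma\cdot 0=0$.

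First I would check that the map $x\mapsto f(\gamma x)$ is differentiable at $0$ with derivative $Df(0)\circ\gamma$. Writing $f(\gamma(0+h))=f(\gamma h)$ and using that $\gamma h\to 0$ with $\norm{\gamma h}\le \norm{\gamma}\,\norm{h}$, differentiability of $f$ at $0=\gamma\cdot0$ gives $f(\gamma h)=f(0)+Df(0)\gamma h+o(h)$, hence $D(f\circ\gamma)(0)=Df(0)\circ\gamma$. Symmetrically, $x\mapsto\gamma f(x)$ is differentiable at $0$ with derivative $\gamma\circ Df(0)$, since $\gamma\big(f(h)\big)=\gamma\big(f(0)+Df(0)h+o(h)\big)=\gamma f(0)+\gamma Df(0)h+o(h)$ by linearity and boundedness of $\gamma$. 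Because $f\circ\gamma=\gamma\circ f$ as maps $V\to V$, their derivatives at the origin coincide, so $Df(0)\,\gamma=\gamma\,Df(0)$. As $\gamma\in\Gamma$ was arbitrary, $Df(0)$ commutes with every element of $\Gamma$, which is the claim.

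The only point requiring care — and really the only obstacle — is that $f$ is assumed differentiable merely \emph{at} the origin, not on a neighbourhood of it, so the chain rule cannot be invoked as a black box. Instead one argues directly from the definition of differentiability at a point, as sketched above, exploiting that $\gamma$ is linear and fixes $0$; nothing else in the argument goes beyond the linearity of the group action. (If one prefers, the same computation can be phrased as: $Df(0)=\lim_{t\to 0^+}\tfrac1t f(tv)$ applied along $v$ and along $\gamma v$, again using $\gamma\cdot0=0$ and linearity of $\gamma$.)
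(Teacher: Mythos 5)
Your argument is correct. Fixing $\gamma\in\Gamma$, expanding both sides of $f(\gamma h)=\gamma f(h)$ via differentiability of $f$ at $0$ (using that $\gamma$ is linear, hence fixes $0$ and maps $o(h)$ terms to $o(h)$ terms), and equating the linear parts gives $Df(0)\,\gamma=\gamma\,Df(0)$; your care in avoiding the chain rule as a black box, since $f$ is differentiable only at the single point $0$, is exactly the right point to worry about. For comparison: the paper states Lemma~\ref{LemaDerivada} without proof (it is a standard fact in the equivariant literature, cf.\ Golubitsky \emph{et al.}\ \cite{golu2}), so there is no in-paper argument to measure yours against; yours is the standard proof. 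Two small remarks: $\gamma\cdot 0=0$ already follows from linearity of the action and does not require faithfulness (faithfulness is only needed elsewhere in the paper to force $f(0)=0$); and your final parenthetical formula $Df(0)v=\lim_{t\to 0^+}\frac{1}{t}f(tv)$ implicitly uses $f(0)=0$, which your main argument does not need, since the constant terms $f(0)=\gamma f(0)$ cancel when you equate the two expansions.
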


Straightforward calculations, Lemma~\ref{LemaDerivada} and Proposition \ref{Liapunov} allow us to describe the
Jacobian matrix at the origin and, from it, the local dynamics for maps equivariant under each of
the subgroups of  $O(2)$.

\begin{proposition} \label{proptable}
Let $f$ be a  planar map differentiable at the origin. The
admissible forms for the Jacobian matrix of $f$ at the origin are
those given in the third column of the Table in Appendix~\ref{EquivTable} depending on the symmetry
group of $f$. The fourth column of this table describes how
the dynamics near the origin depends on the symmetry of $f$.
\end{proposition}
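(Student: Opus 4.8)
The plan is to feed Lemma~\ref{LemaDerivada} into an elementary centraliser computation in $GL(2,\R)$ and then translate eigenvalue information into dynamics via Proposition~\ref{Liapunov}. Set $A=Df(0)$ and $\Gamma=\Gamma_f$. A compact subgroup of $GL(2,\R)$ is conjugate, by some $P\in GL(2,\R)$, to a subgroup of $O(2)$; replacing $f$ by $P^{-1}fP$ replaces $A$ by $P^{-1}AP$ and $\Gamma$ by $P^{-1}\Gamma P$, so it suffices to treat the groups in their standard actions listed in Appendix~\ref{subgroups}. By Lemma~\ref{LemaDerivada}, $A$ lies in the centraliser $Z(\Gamma)=\{B\in GL(2,\R):B\gamma=\gamma B\ \text{for all}\ \gamma\in\Gamma\}$, and the third column of the Table is exactly the description of $Z(\Gamma)$ for each $\Gamma$.

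First I would compute $Z(\Gamma)$ case by case, a routine application of Schur's lemma. For $SO(2)$, and for $\Z_n$ with $n\ge 3$, a generating rotation has no real eigenvalue and already forces $A=\begin{pmatrix}a&-b\\ b&a\end{pmatrix}$, with complex conjugate eigenvalues $a\pm bi$. For $O(2)$ and $D_n$ with $n\ge 3$ the standard action is absolutely irreducible — the reflection interchanges the two complex eigenlines of the rotations — so $Z(\Gamma)=\{aI:a\neq 0\}$, with double real eigenvalue $a$. For $\Z_2(\langle\kappa\rangle)$ and for $D_2$ the flip (together with the second, perpendicular reflection in the case of $D_2$) forces $A=\mathrm{diag}(a,d)$, with real eigenvalues $a$ and $d$. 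For the trivial group, and for the degenerate case $\Z_2=\{\pm I\}$, which is central, there is no restriction on $A$. These are the entries of the third column.

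Next I would read off the fourth column from the eigenvalues of $A$. By Proposition~\ref{Liapunov}, if all eigenvalues of $A$ have modulus strictly less than one the origin is a locally asymptotically stable attractor, and if they all have modulus strictly greater than one it is a local repellor; by definition the origin is a local saddle when the eigenvalues are real with $0<\abs{\alpha}<1<\abs{\beta}$. Hence for $O(2)$ and $D_n$ ($n\ge 3$) only an attractor or a repellor can occur, for $SO(2)$ and $\Z_n$ ($n\ge 3$) only a focus-type attractor or repellor, for $\Z_2(\langle\kappa\rangle)$ and $D_2$ all of attractor, repellor and saddle, and with no (or only central) symmetry every hyperbolic type. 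Recording these possibilities completes the proof.

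The two points that need care are the completeness of the list of groups — handled by the conjugacy reduction at the start, since $Z(\Gamma)$ depends only on the conjugacy class of $\Gamma$ — and the non-hyperbolic boundary cases, where an eigenvalue has modulus exactly one (for instance $a=\pm1$ for $O(2)$ or $D_n$, $a^2+b^2=1$ for $SO(2)$ or $\Z_n$, or a diagonal entry of modulus one for a flip group). There Proposition~\ref{Liapunov} gives no conclusion, and the Table should record that $Df(0)$ alone does not determine the local dynamics. Everything else is a direct $2\times2$ computation.
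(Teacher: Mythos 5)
Your proposal is correct and follows essentially the same route the paper intends: the paper offers no written proof beyond the remark that the result follows from ``straightforward calculations'', Lemma~\ref{LemaDerivada} and Proposition~\ref{Liapunov}, and your centraliser computations for each compact subgroup of $O(2)$ together with the eigenvalue-modulus criterion are exactly those straightforward calculations made explicit. Your closing caveat about non-hyperbolic eigenvalues is consistent with the table, which only records \emph{hyperbolic} local stability.
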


The groups that do not possess a flip are $SO(2)$ and  $\Z_n$. 
For these groups, 
local dynamics does not determine global dynamics.
In order to complete our analysis we add some remarks about each case.

The dynamics of  an $SO(2)$-symmetric embedding is mostly determined by  its radial component,
as  can be seen by writing $f$ in polar coordinates as $f(\rho, \theta)=(R(\rho, \theta), T(\rho, \theta))$. 
Since $f$ is $SO(2)-$equivariant, then  for all $\alpha \in \R$ 
$f(\rho, \theta + \alpha)=(R(\rho, \theta), T(\rho, \theta)+\alpha)$.
Therefore,
$
f(\rho, \theta-\theta)=(R(\rho, 0), T(\rho, 0))=(R(\rho, \theta), T(\rho, \theta)-\theta).
$
So $R(\rho, \theta)$ only depends on $\rho$ and $R\in Emb(\R^+)$.

Suppose  $0$ is a local attractor.
If $\Fix(R)=\{0\}$ then the origin is a global attractor by Lemma \ref{lemgdim1}.
Otherwise, the fixed points of the radial component are invariant circles for $f$ and therefore, knowledge about local dynamics does not contribute to the description of global dynamics\footnote{We are grateful to a referee for pointing this out.}.

In general if  $f$ is an $SO(2)$-symmetric embedding, then
$\R^2$ may be decomposed into 
$f$-invariant annuli with centre at the origin. 
Each annulus is either the union of $f$-invariant circles with $f$ acting as a rotation in each circle (corresponding to fixed points of $R$) or,  for all points $p$ in the annulus,  $\omega(p)$ is contained in the same connected component of the boundary of the annulus.

\medskip

For the group $\Z_n$
we may have a local attractor or repellor if $n\geq 3$ and a local atractor, repellor or saddle in the case of $\Z_2$.
For a local attractor, almost any global dynamics may be realised.
Examples of dissipative $\Z_n$-equivariant diffeomorphism with  a periodic orbit of period $n$
are given in \cite{SofisbeSzlenk} for all $n\ge 2$.
Alarc\'on  \cite{AlarconDenjoy}  proves the existence of a Denjoy map of the circle with symmetry group $\Z_n$.
This is used to construct orientation preserving homeomorphisms of the plane with symmetry group $\Z_n$
having the origin,
the unique fixed point, as an asymptotic local attractor.
Moreover, for these homeomorphisms there exists  a global attractor containing the origin  that is  a  compact and connected subset 
of $\R^2$  with zero  Lebesgue measure.

\medskip
Summarising, we have obtained  conditions on planar $\Gamma$-equivariant maps $f$ under which a local attractor/repellor is always a global attractor/repellor. This concerns only subgroups  $\Gamma$ containing a reflection, the only subgroups where this is possible.

\paragraph{Acknowledgements:}
The research of all authors at Centro de Matem\'atica da Universidade do Porto (CMUP)
 had financial support from
 the European Regional Development Fund through the programme COMPETE and
 from  the Portuguese Government through the Funda\c c\~ao para
a Ci\^encia e a Tecnologia (FCT) under the project PEst-C/MAT/UI0144/2011.
B. Alarc\'on was also supported from EX2009-0283 of the Ministerio de Educaci\'on (Spain) and grants MICINN-08-MTM2008-06065 and MICINN-12-MTM2011-22956 of the Ministerio de Ciencia e
Innovaci\'on (Spain).

\medskip

\bigbreak
\noindent{Email addresses:}
\smallbreak

\noindent{B. Alarc\'on --- alarconbegona@uniovi.es}\\
\noindent{ S.B.S.D. Castro --- sdcastro@fep.up.pt}\\
\noindent{ I.S. Labouriau ---  islabour@fc.up.pt}

\newpage
\appendix

\section{Proof of Proposition \ref{Liapunov}} \label{ApendiceEstabilidade}

In order to prove Proposition~\ref{Liapunov}
we show that the hypotheses guarantee that $f$ is either a uniform contraction or expansion in a neighbourhood of the origin.
For this we need the following standart result, similar to a  Lemma in Chapter 9 \S 1
of Hirsch and Smale \cite{HirschSmale}.

\begin{lemma}\label{lemNorm}
Let $A: \; \R^N \rightarrow \R^N$ be linear and let
$\alpha$, $\beta \in \R$ satisfy
$$
0<\alpha < \left| \lambda\right| < \beta
$$
for all eigenvalues $\lambda$ of $A$. Then there exists a basis for $\R^N$ such that, in the corresponding inner product and norm,
\begin{equation}\label{NormaQuadrado}
\alpha \norm{x}  \leq \; \norm{Ax} \; \leq \beta \norm{x} \qquad\forall \; x \in \R^N.
\end{equation}
and moreover,
\begin{equation}\label{MajoraProduto}
\left|\langle Ax,y\rangle\right|\; \leq \beta \norm{x} \norm{y} \qquad \forall \; x,y \in \R^N.
\end{equation}
\end{lemma}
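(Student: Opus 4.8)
The plan is to produce an \emph{adapted (Lyapunov) norm} for $A$ out of its real Jordan canonical form, which is the classical argument behind this kind of statement. First I would write $A$, in a suitable real basis of $\R^N$, as a block-diagonal matrix whose blocks are either ordinary Jordan blocks $J_k(\lambda)$ attached to a real eigenvalue $\lambda$, or $2m\times 2m$ blocks built from $2\times2$ rotation--dilation matrices $C=\begin{pmatrix} a & -b\\ b & a\end{pmatrix}$ (with $a\pm ib$ a non-real eigenvalue) along the diagonal and $2\times2$ identity blocks on the super-diagonal. Conjugating this matrix by a diagonal matrix that rescales the successive (sub)blocks by increasing powers of a small parameter $\varepsilon>0$ leaves the diagonal part $D$ (the collection of the $[\lambda]$'s and $C$'s) unchanged while replacing the nilpotent super-diagonal part by $\varepsilon N$, where $N$ is a fixed nilpotent matrix. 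Thus, for every $\varepsilon>0$ there is a real basis of $\R^N$ in which $A$ is represented by $D+\varepsilon N$.

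Next I would estimate $D$ and $N$ in the Euclidean inner product and norm $\norm{\cdot}$ associated with such a basis. Each diagonal block of $D$ acts as an exact dilation by the modulus of the corresponding eigenvalue: this is trivial for a $1\times1$ block $[\lambda]$, and for $C=\begin{pmatrix} a & -b\\ b & a\end{pmatrix}$ it follows from $\norm{Cv}^2=(a^2+b^2)\norm{v}^2=\abs{a+ib}^2\norm{v}^2$. Hence, setting $\alpha_0=\min_\lambda\abs{\lambda}$ and $\beta_0=\max_\lambda\abs{\lambda}$ (there are finitely many eigenvalues), we get $\alpha_0\norm{x}\le\norm{Dx}\le\beta_0\norm{x}$ for all $x\in\R^N$, while the hypothesis gives $\alpha<\alpha_0\le\beta_0<\beta$. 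Let $c=\norm{N}$ be the operator norm of the fixed matrix $N$, so that $\norm{Nx}\le c\norm{x}$ for all $x$, a bound independent of $\varepsilon$.

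Finally I would fix $\varepsilon>0$ so small that $\varepsilon c<\min\{\alpha_0-\alpha,\ \beta-\beta_0\}$, and equip $\R^N$ with the inner product and norm of the corresponding basis. For all $x\in\R^N$, the reverse and direct triangle inequalities applied to $Ax=Dx+\varepsilon Nx$ give
$$
(\alpha_0-\varepsilon c)\norm{x}\ \le\ \norm{Dx}-\varepsilon\norm{Nx}\ \le\ \norm{Ax}\ \le\ \norm{Dx}+\varepsilon\norm{Nx}\ \le\ (\beta_0+\varepsilon c)\norm{x},
$$
and the choice of $\varepsilon$ turns this into $\alpha\norm{x}\le\norm{Ax}\le\beta\norm{x}$, which is \eqref{NormaQuadrado}. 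Inequality \eqref{MajoraProduto} is then immediate from Cauchy--Schwarz and the upper bound just proved: $\abs{\langle Ax,y\rangle}\le\norm{Ax}\,\norm{y}\le\beta\norm{x}\,\norm{y}$. The argument has no genuinely hard step; the only point requiring a little care is to pass through the \emph{real} Jordan form (so that the basis stays real) and to notice that its diagonal part $D$ is, block by block, an exact Euclidean dilation, which is precisely what makes the two-sided estimate on $\norm{Dx}$ hold with the naive norm of the chosen basis.
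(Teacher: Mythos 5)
Your proof is correct: the real Jordan form plus the $\varepsilon$-rescaling of the nilpotent part, the observation that the block-diagonal part acts as an exact dilation by $\abs{\lambda}$ on each (orthogonal) block, and Cauchy--Schwarz for \eqref{MajoraProduto} give exactly the stated estimates. The paper does not prove this lemma itself but defers to the classical result in Hirsch and Smale, whose proof is this same adapted-norm construction, so your argument is essentially the intended one.
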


\begin{proof}[Proof of Proposition \ref{Liapunov}]
Write $f(x)=Df(0).x +r(x)$. If $f$ is linear then $r(x) \equiv 0$. Otherwise, using the norm of the previous lemma, since $f$ is differentiable at the origin, we know that
$$
\lim_{x \rightarrow 0} \frac{\norm{r(x)}}{\norm{x}}=0,
$$
that is,
$$
\forall \; \varepsilon >0 \quad\exists \; \delta >0 : \quad \norm{x} < \delta \Rightarrow \norm{r(x)} < \varepsilon \norm{x}.
$$

\bigbreak

Assume the eigenvalues of $Df(0)=A$ have absolute value smaller than  $\beta<1$. Then, we have,
if $\norm{x} < \delta$:
\begin{eqnarray*}
 \norm{f(x)}^2  & = & \langle f(x),f(x)\rangle=\langle Ax+r(x), Ax+r(x)\rangle \\
 & = & \langle Ax,Ax\rangle + 2 \langle Ax, r(x)\rangle +\langle r(x) , r(x) \rangle  \\
 & \leq & \beta^2 \norm{x}^2 + 2 \beta \norm{x}\norm{r(x)} +  \norm{r(x)}^2\\
 & \leq & \beta^2 \norm{x}^2 + 2 \varepsilon\beta \norm{x}^2 + \varepsilon^2 \norm{x}^2 \\
 & \leq &\left( \beta^2  + 2 \varepsilon\beta  + \varepsilon^2\right) \norm{x}^2
\end{eqnarray*}
showing that $f$ is a uniform contraction in a ball of radius $\delta$ around the origin provided that
$\beta^2 + 2 \beta \varepsilon + \varepsilon^2 <1$. Since $\beta$ is fixed and  smaller that one, such an $\varepsilon$ always exists.

\bigbreak
Assume now that
$$
1 < \alpha <\left|\lambda\right| < \beta,
$$
for all eigenvalues $\lambda$ of $Df(0)=A$. Then
\begin{eqnarray*}
\norm{f(x)}^2& = & \langle f(x),f(x)\rangle = \langle Ax+r(x), Ax+r(x)\rangle  \\
 & = & \langle Ax,Ax \rangle + 2 \langle Ax, r(x) \rangle + \langle r(x) , r(x) \rangle  \\
 & \geq & \alpha^2 \norm{x}^2 + 2 \langle Ax, r(x) \rangle
\end{eqnarray*}
since $\norm{r(x)}^2 > 0$. It then follows that
\begin{eqnarray*}
 \norm{f(x)}^2 & \geq & \alpha^2 \norm{x}^2 + 2 \langle Ax, r(x) \rangle  \\
 & \geq &\alpha^2 \norm{x}^2 - 2\left| \langle Ax, r(x)\rangle \right| \\
 & \geq & (\alpha^2 - 2\beta \varepsilon) \norm{x}^2,
\end{eqnarray*}
showing that $f$ is a uniform expansion in a ball of radius $\delta$ around the origin provided that
$$
\varepsilon < \frac{\alpha^2 -1}{2\beta}.
$$
Since $\alpha^2 -1>0$ such an $\varepsilon$ always exists.

\end{proof}

\section{Compact subgroups of $O(2)$} \label{subgroups}

\begin{itemize}
    \item  $O(2)$, acting on $\R^2 \simeq \C$ as the group generated by $\theta$ and $\kappa$ given by
    $$
    \theta . z = e^{i\theta} z, \quad \theta \in S^1 \qquad\mbox{ and   } \qquad \kappa . z=\bar{z}.
    $$
    \item  $SO(2)$, acting on $\R^2 \simeq \C$ as the group generated by $\theta$  given by
    $$
    \theta . z = e^{i\theta} z, \quad \theta \in S^1.
    $$
    \item  $D_n$, $n  \geq 2$, acting on $\R^2 \simeq \C$ as the finite group generated by $\zeta$ and $\kappa$ given by
    $$
    \zeta . z = e^{\frac{2\pi i}{n}} z  \qquad\mbox{ and   } \qquad \kappa . z=\bar{z}.
    $$
    \item  $\Z_n$, $n  \geq 2$, acting on $\R^2 \simeq \C$ as the finite group generated by $\zeta$ given by
    $$
    \zeta . z = e^{\frac{2\pi i}{n}} z.
    $$
    \item  $\Z_2(\langle\kappa\rangle)$, acting on $\R^2$ as
    $$
    \kappa . (x,y) = (x, -y).
    $$

\end{itemize}

\section{The equivariant table}\label{EquivTable}

This appendix contains a table summarizing the bulk of results in the paper. The first column concerns the group of symmetry. The second column provides information about the existence of an invariant ray. The third and fourth columns concern the dynamics by providing the form of the jacobian matrix at the origin and a list of possible local dynamics. Finally, the last column lists hypotheses required to go from local to global dynamics.

No conditions are provided when the fixed point is a local saddle since this case is not addressed here.

\newpage
\thispagestyle{empty}
\begin{table}[hh] \label{Apendiceequvtable} \label{ApendiceTabela}
\centering

\begin{sideways}
\begin{tabular}{|c|c|c|c|c|c|}

\hline \multicolumn{5}{|c|}{EQUIVARIANT TABLE} \\

\hline Symmetry Group & Contains $\kappa$?& $Df(0)$ &  \multicolumn{1}{|p{3cm}|}{Hyperbolic \par Local Stability} & \multicolumn{1}{|p{5cm}|}{Hypothesis for Hyperbolic \par Global Stability.} \\

\hline $O(2)$ & yes  & $\left( \begin{matrix} \alpha & 0 \\ 0 & \alpha \end{matrix} \right)$ $\alpha \in \R$
& \multicolumn{1}{|p{3cm}|}{attractor \par $ $ \par repellor} & \multicolumn{1}{|p{5cm}|}{$\Emb(\R^2)$ differentiable and dissipative. \par $\Emb(\R^2)$ differentiable.} \\

\hline $SO(2)$ & no  & $\left( \begin{matrix} \alpha & -\beta \\ \beta & \alpha \end{matrix} \right)$ $\alpha, \beta \in \R$
& attractor / repellor &  \multicolumn{1}{|p{5cm}|}{ Other configurations. }\\

\hline $D_n, \;n\geq 3,$ & yes  & $\left( \begin{matrix} \alpha & 0 \\ 0 & \alpha \end{matrix} \right)$ $\alpha \in \R$
& \multicolumn{1}{|p{3cm}|}{attractor \par $ $ \par repellor} & \multicolumn{1}{|p{5cm}|}{$\Emb(\R^2)$ differentiable and dissipative. \par $\Emb(\R^2)$ differentiable.}\\

\hline $\Z_n, \;n\geq 3$ & no & $\left( \begin{matrix} \alpha & -\beta \\ \beta & \alpha \end{matrix} \right)$ $\alpha, \beta \in \R$ & attractor / repellor &  \multicolumn{1}{|p{5cm}|}{ Other configurations. }\\

\hline $\Z_2(\langle\kappa\rangle)$ & yes & $\left( \begin{matrix} \alpha & 0 \\ 0 & \beta \end{matrix} \right)$ $\alpha, \beta \in \R$ & \multicolumn{1}{|p{3cm}|}{attractor \par $ $ \par repellor \par saddle} & \multicolumn{1}{|p{5cm}|}{$\Emb(\R^2)$ differentiable and dissipative. \par $\Emb(\R^2)$ differentiable. \par ---}\\

\hline $\Z_{2}$ & no  & any matrix  & \multicolumn{1}{|p{3.5cm}|}{attractor / repellor \par saddle} & \multicolumn{1}{|p{5cm}|}{ Other configurations.  \par ---}\\

\hline $D_{2}$ & yes &  $\left( \begin{matrix} \alpha & 0 \\ 0 & \beta \end{matrix} \right)$ $\alpha, \beta \in \R$
& \multicolumn{1}{|p{3cm}|}{attractor \par $ $ \par repellor \par saddle} & \multicolumn{1}{|p{5cm}|}{$\Emb(\R^2)$ differentiable and dissipative. \par $\Emb(\R^2)$ differentiable. \par ---}\\

\hline

\end{tabular}
\end{sideways}
\end{table}

\end{document}